\newtheorem{thm}{Theorem}
\newtheorem{cor}{Corollary}
\newtheorem{prop}{Proposition}
\newtheorem{lem}{Lemma}
\newtheorem*{xrem}{Remark}
\numberwithin{equation}{section}
\newcommand\nolabel[1]{\nonumber}
\newcommand\R{\mathbb{R}}
\newcommand\N{\mathbb{N}}
\newcommand\Q{\mathbb{Q}}
\newcommand\Z{\mathbb{Z}}
\newcommand{\abs}[1]{\left| #1 \right| }
\DeclareMathOperator{\conv}{conv}
\DeclareMathOperator{\dist}{dist}
\numberwithin{equation}{section}
\def\eq#1{{\rm(\ref{#1})}}
\def\Eq#1#2{\ifthenelse{\equal{#1}{*}}
  {\begin{equation*}\begin{aligned}[]#2\end{aligned}\end{equation*}}
  {\begin{equation}\begin{aligned}[]\label{#1}#2\end{aligned}\end{equation}}}
\title{Jensen-type geometric shapes}
    \subjclass[2010]{39B62, 52B10, 52B11, 52A05}
\keywords{shapes, Platonic shapes, sphere, ball, Jensen's inequality}
\author[P. Pasteczka]{Pawe\l{} Pasteczka}
\address{Institute of Mathematics \\ Pedagogical University of Cracow \\ Podchor\k{a}\.zych str. 2, 30-084 Krak\'ow, Poland}
\email{pawel.pasteczka@up.krakow.pl}
\begin{document}
\maketitle
\begin{abstract}
We present both necessary and sufficient conditions to the convex closed shape $X$ such that the inequality 
$$ \frac{1}{|X|} \int_X f(x)\:dx \le \frac{1}{|\partial X|} \int_{\partial X} f(x)\:dx$$
is valid for every  convex function $f \colon X \to \mathbb{R}$ ($\partial X$ stands for the boundary of $X$).

It is proved that this inequality holds if $X$ is 
(i) an $n$-dimensional parallelotope, (ii) an $n$-dimensional ball, (iii) a convex polytope having an inscribed sphere (tangent to all its facets) with center in the center of mass of $\partial X$.
\end{abstract}

\section{Introduction}

Dragomir and Pearce proved \cite[Theorem 215]{DraPea00} that if $B_n$ is an $n$-dimensional ball then
\Eq{*}{
\fint_{B_3} f(x) dx \le \fint_{\partial B_3} f(x) dx
}
for every convex function $f$; here and below $\fint$ stands for the average integral (more precisely $\fint_X f(x)\:dx:=\tfrac1{|X|}\int_X f(x)\:dx$). 
During \emph{Conference on Inequalities and Applications 2016} P\'ales stated the problem whether for every convex and closed set $X$ and every convex function $f \colon X \to \R$, the inequality
\Eq{E:conv_def}{
\fint_X f(x) dx \le \fint_{\partial X} f(x) dx
}
is valid. It is however easy to verify that for the triangle $T$ with vertices $(0,-1)$, $(0,1)$, $(1,0)$ and the function $f \colon T \ni (x,y) \mapsto x$, the inequality \eq{E:conv_def} voids (as the inequality $\tfrac13 \le 1-\tfrac{\sqrt{2}}2$ is not valid). Furthermore, this property is invariant under transition, scaling, changing orientation and reflection, whence it is a property of a shape. 

This motivates us to introduce the following definition. Convex and closed shape $X$ is called \emph{Jensen-type} if for every convex function $f \colon X \to \R$, the inequality \eq{E:conv_def} is satisfied.

Using this definition Dragomir--Pearce result can be expressed briefly as {\it $3$-dimensional ball is of Jensen-type} or {\it $B_3$ is of Jensen-type}. The second example can be expressed by {\it 45--45--90 triangle is not of Jensen-type}.

Motivated by these preliminaries we are going to prove this property for \mbox{regular polygons}, \mbox{parallelotopes} (in all dimensions), \mbox{balls} (in all dimensions), and \mbox{Platonic solids}.

\section{Results}
We begin with some necessary condition for $X$ to be Jensen-type. It repeats the argumentation which was already presented in a case of triangle $T$.

\begin{lem}\label{lem:necc}
 If $X$ is of Jensen-type then centers of mass of $X$ and $\partial X$ coincide.
\end{lem}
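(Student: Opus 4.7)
The plan is to exploit the fact that affine functions are simultaneously convex and concave, so the Jensen-type inequality applied to both $f$ and $-f$ forces equality for every affine $f$.

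Concretely, I would fix an arbitrary affine function $f \colon X \to \R$, written $f(x) = \langle v, x\rangle + c$ for some $v \in \R^n$ and $c \in \R$. Since both $f$ and $-f$ are convex, applying the defining inequality \eq{E:conv_def} to each yields
\Eq{*}{
\fint_X f(x)\,dx \le \fint_{\partial X} f(x)\,dx \quad\text{and}\quad \fint_X f(x)\,dx \ge \fint_{\partial X} f(x)\,dx,
}
so these two averages must coincide for every affine $f$.

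Next, I would specialize $f$ to each coordinate projection $x \mapsto x_i$ for $i = 1, \dots, n$, which is affine and hence yields the equality above. The left-hand side $\fint_X x_i\,dx$ is precisely the $i$-th coordinate of the center of mass of $X$, while the right-hand side is the $i$-th coordinate of the center of mass of $\partial X$. Letting $i$ vary over all coordinates gives coincidence of the two centers of mass, which is the desired conclusion.

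There is no substantial obstacle here: the argument is essentially the observation that the Jensen-type condition is a one-sided inequality restricted to convex functions, but affine functions sit on the boundary between convex and concave and therefore force equality. This is the same principle already used in the introduction to disqualify the triangle $T$ via the linear test function $f(x,y) = x$, now turned into a universal statement by running it against every coordinate projection and its negative.
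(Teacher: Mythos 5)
Your proposal is correct and follows essentially the same route as the paper's own proof: the paper applies the Jensen-type inequality to the coordinate projections $\pi_i$ and $-\pi_i$ to force equality of the averages, exactly as you do after specializing your affine $f$ to the coordinates. The brief detour through general affine functions changes nothing of substance.
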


\begin{proof}
Let $\pi_i \colon \R^n \to \R$ be a projection on $i$-th coordinate ($i \in \{1,\dots,n\}$). Both $\pi_i$ and $-\pi_i$ are convex so, as $X$ is of Jensen-type, we get
 \Eq{*}{
 \fint_X \pi_i(x) dx \le \fint_{\partial X} \pi_i(x) dx
\quad\text{ and }\quad \fint_X -\pi_i(x) dx \le \fint_{\partial X} -\pi_i(x) dx.
 }
Thus $\fint_X \pi_i(x) dx = \fint_{\partial X} \pi_i(x) dx$ for $i \in \{1,\dots,n\}$. But centers of mass of $X$ and $\partial X$ equal to $\left(\fint_X \pi_i(x) dx \right)_{i=1}^n$ and $\left(\fint_{\partial X} \pi_i(x) dx \right)_{i=1}^n$, respectively. The above equality states that these points coincide.
\end{proof}

\begin{xrem}
We have presented some necessary condition for a shape to be of Jensen-type. Our conjecture is that every convex shape which satisfies this condition is of Jensen-type.
\end{xrem}

In the subsequent result we are going to prove that all parallelotopes and $n$-dimensional balls are of Jensen-type.

\begin{prop}
 All parallelotopes are of Jensen-type.
\end{prop}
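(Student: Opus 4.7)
The plan is to write the parallelotope in parametric form and exploit convexity of $f$ along the edge directions. By translation invariance, assume $P = T([0,1]^n)$, where $T \colon \R^n \to \R^n$ is the linear map sending $e_i$ to the $i$-th edge vector $v_i$, so that $|P| = |\det T|$. Denote by $F_i^0$ and $F_i^1$ the opposite pair of facets of $P$ given by $t_i = 0$ and $t_i = 1$, with common $(n-1)$-dimensional measure $A_i$; then $|\partial P| = 2\sum_{j} A_j$.

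For every $x = T(t) \in P$, the segment through $x$ parallel to $v_i$ has endpoints $T(t - t_i e_i)$ on $F_i^0$ and $T(t + (1-t_i) e_i)$ on $F_i^1$, so convexity of $f$ on this segment gives
$$f(x) \le (1-t_i)\,f\bigl(T(t - t_i e_i)\bigr) + t_i\,f\bigl(T(t + (1-t_i)e_i)\bigr).$$
I would take a convex combination of these $n$ pointwise inequalities with weights $\lambda_1, \dots, \lambda_n \ge 0$ summing to $1$ (to be chosen) and then integrate in $x$. After the change of variables $x = T(t)$ (Jacobian $|P|$), Fubini together with the identities $\int_0^1 (1-t_i)\,dt_i = \int_0^1 t_i\,dt_i = \tfrac12$ collapses each summand to a boundary integral; using that $T$ restricted to the hyperplane $\{t_i = 0\}$ has area-scaling factor $A_i$, the $i$-th term contributes $\tfrac{|P|\lambda_i}{2A_i}\bigl(\int_{F_i^0} f + \int_{F_i^1} f\bigr)$ to the resulting upper bound.

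The crux is choosing the weights. Setting $\lambda_i := A_i / \sum_j A_j$ normalizes these coefficients to the common value $|P|/|\partial P|$, so that summation in $i$ reconstructs exactly $\tfrac{|P|}{|\partial P|} \int_{\partial P} f$ on the right-hand side, yielding \eq{E:conv_def}.

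I expect the main obstacle to be the observation that one cannot simply invoke the known unit-cube case and pull back along $T$: the Jensen-type property is preserved by similarities but not by general linear maps, because $T$ distorts the ratios of opposite facet areas $A_i$. Weighting the pointwise convexity inequalities by $\lambda_i \propto A_i$ is precisely the correction which absorbs this distortion, reducing the parallelotope case to a weighted superposition of one-dimensional Jensen inequalities along the $n$ edge directions.
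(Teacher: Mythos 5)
Your proof is correct and follows essentially the same route as the paper: both arguments reduce the claim to one-dimensional convexity along the $n$ edge directions (your integrated pointwise convexity inequality is exactly the Hermite--Hadamard upper bound the paper invokes on each segment joining opposite facets), and your choice of weights $\lambda_i \propto A_i$ is precisely equivalent to the paper's summing of the unnormalized inequalities $2|S_i|\fint_W f \le \int_{S_i\cup S_i^*} f$ over all facets. The computations check out, so no changes are needed.
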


\begin{proof}
Fix parallelotope $W$ of dimension $n$. Let $\{S_i\}_{i=1}^{2^n}$ be its all facets. Denote by $S_i^*$ the facet opposite to $S_i$. In fact facet $S_i^*$ is a facet $S_i$ shifted by some vector $v_i \in \R^n$.
Finally, for $y \in S_i$, let $y^\ast:=y+v_i \in S_i^*$.

Now fix a convex function $f \colon W \to \R$. By Hermite-Hadamard inequality we have
\Eq{*}{
%\fint_y^{y^*} f(x) dx &\le \frac{f(y)+f(y^*)}2,\\
\int_y^{y^*} f(x) dx &\le \dist(S_i,S_i^*) \cdot \frac{f(y)+f(y^*)}2 \qquad (y \in S_i),
}
which in view of the equality $|S_i|\dist(S_i,S_i^*)=|W|$ is equivalent to
\Eq{*}{
\frac{2|S_i|}{|W|} \int_y^{y^*} f(x) dx &\le f(y)+f(y^*) \qquad (y \in S_i),
}
We integrate both side over $S_i$ to obtain
\Eq{*}{
2|S_i| \cdot \fint_W f(x)\: dx \le \int_{S_i \cup S_i^*} f(x) \:dx.
}
Finally, let us sum up the above inequality for $i \in \{1,2,\dots,2^n\}$. Then we obtain
\Eq{*}{
2\abs{\partial W} \cdot \fint_W f(x) dx &\le 2 \int_{\partial W} f(x) dx,
}
which simplifies to $\fint_W f(x) dx \le \fint_{\partial W} f(x) dx$.
\end{proof}

In the next proposition we will generalize the Dragomir-Pearce result.
\begin{prop}
The $n$-dimensional ball is of Jensen-type for every $n \ge 2$. 
\end{prop}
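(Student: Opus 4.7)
\medskip

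\noindent\textbf{Proof proposal.} Without loss of generality take $B_n$ to be the closed unit ball centered at the origin, so that $\partial B_n = S^{n-1}$. The plan is to reduce the $n$-dimensional inequality to a one-dimensional comparison by averaging $f$ over concentric spheres, and then exploit the convexity of $f$ to control the resulting radial profile.

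For $r\in[0,1]$ set
\[
g(r):=\fint_{\partial B_r}f(x)\,dS(x)=\fint_{S^{n-1}}f(r\omega)\,dS(\omega),
\]
i.e.\ the average of $f$ over the sphere of radius $r$. Using polar coordinates together with $|\partial B_r|=r^{n-1}|S^{n-1}|$ and $|B_n|=|S^{n-1}|/n$, one immediately gets
\[
\fint_{B_n}f(x)\,dx=n\int_0^1 r^{n-1}g(r)\,dr,\qquad \fint_{\partial B_n}f(x)\,dS(x)=g(1).
\]
Thus the Jensen-type inequality is equivalent to the one-dimensional statement $n\int_0^1 r^{n-1}g(r)\,dr\le g(1)$.

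The heart of the argument is to show that $g$ is non-decreasing on $[0,1]$. First, extending the definition of $g$ to all $r\in\R$ by the same formula, convexity of $f$ together with linearity of integration yields
\[
g(tr_1+(1-t)r_2)=\fint_{S^{n-1}}f\bigl(tr_1\omega+(1-t)r_2\omega\bigr)\,dS(\omega)\le tg(r_1)+(1-t)g(r_2),
\]
so $g$ is convex on $\R$. Second, the change of variables $\omega\mapsto -\omega$ preserves the surface measure on $S^{n-1}$, giving $g(-r)=g(r)$; hence $g$ is even. A convex even function is non-decreasing on $[0,\infty)$: for $0\le a<b$ one writes $a=\lambda(-b)+(1-\lambda)b$ with $\lambda=(b-a)/(2b)\in[0,1/2]$ and applies convexity together with $g(-b)=g(b)$ to obtain $g(a)\le g(b)$.

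Finally, since $g(r)\le g(1)$ for every $r\in[0,1]$ and $n\int_0^1 r^{n-1}\,dr=1$, we conclude
\[
\fint_{B_n}f(x)\,dx=n\int_0^1 r^{n-1}g(r)\,dr\le n\int_0^1 r^{n-1}g(1)\,dr=g(1)=\fint_{\partial B_n}f(x)\,dS(x).
\]
There is no serious obstacle here; the only delicate point is verifying that the spherical mean $g$ is genuinely even (which is particular to the sphere, and is where the central symmetry of $B_n$ enters crucially), since convexity alone would only give $g$ convex, not monotone.
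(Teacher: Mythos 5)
Your proof is correct and is essentially the paper's argument in different packaging: the paper integrates the pointwise Wright-convexity inequality $f(rx)+f(-rx)\le f(x)+f(-x)$ over the sphere, which is exactly your combination of ``$g$ is even'' with ``an even convex function is non-decreasing'', and the polar-coordinate reduction is identical. (One cosmetic slip: $f$ lives only on $B_n$, so $g$ is defined on $[-1,1]$ rather than all of $\R$, but that interval is all your argument uses.)
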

\begin{proof}
Fix a convex function $f \colon B_n \to \R$. 
We have 
\Eq{*}{
\fint_{B_n} f(x) dx &= \frac{1}{|B_n|} \int_{B_n} f(x) dx 
= \frac{1}{|B_n|} \int_0^1 r^{n-1} \int_{S_{n-1}} f(rx) dx dr \\
&= \frac{1}{|B_n|} \int_0^1 \frac{r^{n-1}}{2} \int_{S_{n-1}} f(rx)+f(-rx) dx dr
}
Applying Wright-convexity of $f$ we get
\Eq{*}{
\fint_{B_n} f(x) dx &\le \frac{1}{|B_n|} \int_0^1 \frac{r^{n-1}}{2} \int_{S_{n-1}} f(x)+f(-x) dx dr \\
&\le \frac{1}{|B_n|} \int_0^1 r^{n-1} dr \int_{S_{n-1}} f(x) dx 
= \frac{1}{n |B_n|} \int_{S_{n-1}} f(x) dx.
}
By the identity $n |B_n|=\abs{S_{n-1}}$, we obtain desired inequality.
\end{proof}

\subsection{Convex polytopes having an inscribed sphere} 
We will now struggle with convex polytopes. To avoid misunderstandings the \emph{inscribed sphere} is the sphere which is tangent to all facets.

\begin{lem}
\label{lem:ostroslup}
Let $n \in \N$, $\Delta \subset \R^n$ be a convex $(n-1)$-dimensional shape, $s \in \R^n \setminus \Delta$ and $G=\conv \{\Delta,s\}$. Then for every convex function $f \colon G \to \R$,
\Eq{*}{
\fint_G f(x) dx \le \frac{n}{n+1} \fint_{\Delta} f(x) dx + \frac1{n+1} f(s).
}

\end{lem}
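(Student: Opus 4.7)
The plan is to view $G$ as a (generalized) pyramid with base $\Delta$ and apex $s$, and parametrize it via the straight-line segments from $s$ to points of $\Delta$. Concretely, I would set up the map
\[
\Phi \colon \Delta \times [0,1] \to G, \qquad \Phi(y,t) = (1-t)s + ty,
\]
which is a bijection (up to the measure-zero apex). Choosing coordinates so that the affine hull of $\Delta$ is the hyperplane $\{x_n = 0\}$ and $s = (0,\dots,0,h)$ with $h = \dist(s, \mathrm{aff}\,\Delta)$, the Jacobian determinant of $\Phi$ computes easily from a block-triangular matrix and equals $h\,t^{n-1}$ in absolute value. Combined with the elementary pyramid formula $|G| = h|\Delta|/n$, the change of variables yields
\[
\fint_G f(x)\,dx \;=\; \frac{n}{|\Delta|}\int_0^1 t^{n-1}\!\int_\Delta f\bigl((1-t)s + ty\bigr)\,dy\,dt.
\]

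Next I would invoke convexity of $f$ pointwise along the segment from $s$ to $y$, i.e.
\[
f\bigl((1-t)s + ty\bigr) \le (1-t)f(s) + t f(y),
\]
and integrate in $y$ over $\Delta$. Substituting this bound above and separating the two resulting terms, I get
\[
\fint_G f(x)\,dx \;\le\; n f(s)\int_0^1 t^{n-1}(1-t)\,dt \;+\; \frac{n}{|\Delta|}\int_0^1 t^n\,dt\,\int_\Delta f(y)\,dy.
\]
The two one-dimensional integrals are beta-function values: $\int_0^1 t^{n-1}(1-t)\,dt = \tfrac{1}{n(n+1)}$ and $\int_0^1 t^n\,dt = \tfrac{1}{n+1}$. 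Plugging in gives exactly the coefficients $\tfrac{1}{n+1}$ and $\tfrac{n}{n+1}$ claimed in the lemma.

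There is no real obstacle; the only point requiring minor care is the Jacobian computation and the matching pyramid-volume identity $|G| = h|\Delta|/n$, which must cancel the factor $h$ that appears in the volume element so that the final bound depends only on the height indirectly (through the normalization). The convexity step is the only analytic ingredient, and it enters in its most elementary form (convexity along a single segment), so no appeal to Wright-convexity or Hermite–Hadamard is needed here.
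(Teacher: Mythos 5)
Your proposal is correct and is essentially the paper's own argument: the paper reaches the same identity $\int_G f = H\int_0^1 \theta^{n-1}\int_{\Delta} f(\theta x+(1-\theta)s)\,dx\,d\theta$ by slicing $G$ into homothetic copies $\Delta_\theta$ of the base rather than by your explicit change of variables $\Phi(y,t)=(1-t)s+ty$, and then applies the same convexity-along-segments bound, the same beta integrals, and the same pyramid volume formula $|G|=H|\Delta|/n$. The two derivations of the cone parametrization are interchangeable, so there is nothing substantive to distinguish.
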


\begin{proof}
For each $\theta \in (0,1]$ let $T_\theta$ a homothetic transformation of $\Delta$ with center $s$ and scale $\theta$. Denote its image by $\Delta_\theta$. Moreover denote $H:=\dist(p,\Delta)$ and $\pi \colon \Delta \to \Delta_1$ be a projection such that $\pi|_{\Delta_\theta}=T_\theta^{-1}$.

We know that 
\Eq{*}{
x=\theta \cdot \pi(x)+(1-\theta) \cdot s \text{ for all }\theta \in (0,1] \text{ and }x \in \Delta_\theta.
}
Whence
\Eq{*}{
\int_G f(x) dx 
&= H \cdot \int_0^1 \int_{\Delta_\theta} f(x) dx d\theta \\
&= H \cdot \int_0^1 \int_{\Delta_\theta} f(\theta \cdot \pi(x) +(1-\theta) s) dx d\theta. \\
&= H \cdot \int_0^1 \int_{\Delta_1} \theta^{n-1} f(\theta \cdot x +(1-\theta) s) dx d\theta 
}
Thus, by Jensen's and Fubini's inequalities,
\Eq{*}{
\int_G f(x) dx  &\le H \cdot  \int_{\Delta_1} \int_0^1 \theta^n \: d\theta \cdot f(x) +\int_0^1 \theta^{n-1} (1-\theta) \: d\theta \cdot f(s) \: dx.
}
By $\int_0^1 \theta^n d\theta =\tfrac{1}{n+1}$ and $\int_0^1 \theta^{n-1}(1-\theta)d\theta=\tfrac{1}{n(n+1)}$ we obtain
\Eq{*}{
\int_G f(x) dx  &\le H \cdot \left( \tfrac{1}{n+1} \int_{\Delta_1} f(x) dx + \tfrac{1}{n(n+1)}  \abs{\Delta_1} \cdot f(s) \right) \\
&= \frac{H \cdot \abs{\Delta_1}}n \cdot \left( \tfrac{n}{n+1} \fint_{\Delta_1} f(x) dx + \tfrac{1}{n+1} \cdot f(s) \right).
}
To finish the proof we can use the classical equality $|G|=\tfrac1n \cdot H \cdot |\Delta_1|$.
\end{proof}

\begin{thm}
 Let $W$ be an $n$-dimensional convex polytope having an inscribed sphere with center $s$. Then 
\Eq{E:thm1}{
\fint_W f(x) dx \le \frac{n}{n+1} \fint_{\partial W} f(x) dx + \frac1{n+1} f(s)
}
for every convex function $f \colon W \to \R$.
\end{thm}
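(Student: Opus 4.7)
The plan is to decompose $W$ into pyramids with common apex $s$ and bases equal to the facets of $W$, apply Lemma~\ref{lem:ostroslup} to each pyramid, and then sum up the resulting inequalities. The decisive role of the inscribed-sphere assumption is that the distance from $s$ to every facet equals the common inradius $r$; this uniformity is exactly what causes the weighted sum to simplify properly.

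In detail, I would enumerate the facets $\Delta_1,\dots,\Delta_k$ of $W$ and set $G_i:=\conv\{\Delta_i,s\}$. Since $s$ is the center of an inscribed sphere of positive radius, $s$ lies at strictly positive distance from each $\Delta_i$, so Lemma~\ref{lem:ostroslup} applies to each pyramid. Moreover, the $G_i$'s have pairwise disjoint interiors and $\bigcup_i G_i = W$ (a standard fact for convex polytopes once $s$ lies in the interior).

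Applying the lemma to $G_i$ and multiplying through by $|G_i|$ gives
\Eq{*}{
\int_{G_i} f(x)\,dx \le \frac{n}{n+1}\cdot\frac{|G_i|}{|\Delta_i|}\int_{\Delta_i} f(x)\,dx + \frac{|G_i|}{n+1}f(s).
}
Using the tangency condition, the height of $G_i$ from $s$ to $\Delta_i$ equals the inradius $r$, so $|G_i|=\tfrac{r}{n}|\Delta_i|$. Substituting and summing over $i$, while using $\sum_i |\Delta_i|=|\partial W|$ and $\sum_i|G_i|=|W|=\tfrac{r}{n}|\partial W|$, produces
\Eq{*}{
\int_W f(x)\,dx \le \frac{r}{n+1}\int_{\partial W} f(x)\,dx + \frac{r|\partial W|}{n(n+1)}f(s) = \frac{n|W|}{(n+1)|\partial W|}\int_{\partial W}f(x)\,dx + \frac{|W|}{n+1}f(s).
}
Dividing through by $|W|$ delivers \eq{E:thm1}.

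There is no serious obstacle here: the lemma is tailored exactly for a single pyramid, and the constants in it are compatible with simple summation. The only point requiring attention is verifying the two volumetric identities $|G_i|=\tfrac{r}{n}|\Delta_i|$ and $|W|=\tfrac{r}{n}|\partial W|$, both of which are immediate consequences of the fact that the inscribed sphere is tangent to every facet. If these equalities failed (as will happen for a general polytope lacking an inscribed sphere), the weights $|G_i|/|\Delta_i|$ in the summed inequality would no longer be equal and one could not pull a common factor out of the boundary integral; this is precisely why the hypothesis enters essentially.
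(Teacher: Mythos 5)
Your proposal is correct and follows essentially the same route as the paper: decompose $W$ into the pyramids $\conv\{\Delta_i,s\}$ over the facets, apply Lemma~\ref{lem:ostroslup} to each, use the tangency identity $|G_i|=\tfrac{r}{n}|\Delta_i|$ (hence $|W|=\tfrac{r}{n}|\partial W|$), and sum. Your explicit remarks on why the decomposition is valid and why the inscribed-sphere hypothesis is essential are welcome additions, but the argument is the same.
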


\begin{proof}
Let $r$ be a radius of the inscribed sphere. Denote all facets of $W$ by $\{A_1,\dots,A_k\}$, moreover let $G_i=\conv\{A_i,s\}$. We have
$\abs{G_i}=\tfrac rn \cdot \abs{A_i}$, in particular $\abs{W}=\tfrac rn \abs{\partial W}$.
By Lemma~\ref{lem:ostroslup}, for all $i \in \{1,\dots,k\}$ we have
\Eq{*}{
%\fint_{G_i} f(x) dx &\le \frac{n}{n+1} \fint_{A_i} f(x) dx + \frac1{n+1} f(s) \\
\int_{G_i} f(x) dx &\le \frac{n}{n+1} \cdot \abs{G_i} \cdot \fint_{A_i} f(x) dx + \frac1{n+1} \cdot \abs{G_i} \cdot f(s) \\
&= \frac{n}{n+1} \cdot \frac rn \cdot \abs{A_i} \cdot \fint_{A_i} f(x) dx + \frac1{n+1} \cdot \abs{G_i} \cdot f(s) \\
&= \frac{r}{n+1} \cdot \int_{A_i} f(x) dx + \frac1{n+1} \cdot \abs{G_i} \cdot f(s)
}
Summing this inequality (side-by-side for $i \in \{1,\cdots,n\}$) we obtain
\Eq{*}{
\int_{W} f(x) dx &\le \frac{r}{n+1} \cdot \int_{\partial W} f(x) dx + \frac1{n+1} \cdot \abs{W} \cdot f(s) \\
&= \frac{r\cdot \abs{\partial W}}{n+1} \cdot \fint_{\partial W} f(x) dx + \frac1{n+1} \cdot \abs{W} \cdot f(s)
}
To finish the proof note that $\frac{r\cdot \abs{\partial W}}{n+1} = \frac{n}{n+1} \cdot \frac{r\cdot \abs{\partial W}}n = \frac{n}{n+1} \cdot \abs{W}$.
\end{proof}

We can now present some simple corollary.
\begin{cor}
Let $W$ be a convex $n$-dimensional polytope having an inscribed sphere with center $s$ and $m$ be the center of mass of $\partial W$. Then 
\Eq{*}{
\fint_W f(x) dx \le \fint_{\partial W} f(x) dx + \frac1{n+1} (f(s)-f(m))
}
for every convex function $f \colon W \to \R$.
\end{cor}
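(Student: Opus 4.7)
The plan is to derive the corollary directly from the preceding theorem by rewriting its coefficients and then sharpening the $\frac{n}{n+1}\fint_{\partial W} f$ term via Jensen's inequality applied to the boundary measure. The theorem already furnishes
\Eq{*}{
\fint_W f(x)\,dx \le \frac{n}{n+1} \fint_{\partial W} f(x)\,dx + \frac{1}{n+1} f(s),
}
and since $\frac{n}{n+1} = 1 - \frac{1}{n+1}$, this is the same as
\Eq{*}{
\fint_W f(x)\,dx \le \fint_{\partial W} f(x)\,dx + \frac{1}{n+1}\Bigl(f(s) - \fint_{\partial W} f(x)\,dx\Bigr).
}

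Next I would invoke Jensen's inequality for the convex function $f$ with respect to the normalized surface measure on $\partial W$. That measure is a probability measure whose barycenter is, by definition, the center of mass $m$ of $\partial W$. Since $\partial W \subset W$ and $W$ is convex and closed, $m$ lies in $W$, so $f(m)$ is defined and Jensen gives $f(m) \le \fint_{\partial W} f(x)\,dx$, equivalently $-\fint_{\partial W} f(x)\,dx \le -f(m)$. Because $\tfrac{1}{n+1} > 0$, substituting this into the displayed inequality above yields exactly
\Eq{*}{
\fint_W f(x)\,dx \le \fint_{\partial W} f(x)\,dx + \frac{1}{n+1}(f(s) - f(m)),
}
as desired.

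There is really no obstacle in this argument; the only subtlety is to check that $m \in W$ so that Jensen's inequality can legitimately be applied to $f$. This is automatic from convexity of $W$, so the corollary reduces to a one-line rearrangement followed by the textbook Jensen step.
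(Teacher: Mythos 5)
Your argument is correct and is essentially identical to the paper's own proof: both rewrite the coefficient $\tfrac{n}{n+1}=1-\tfrac1{n+1}$ (equivalently, add the nonnegative quantity $\tfrac1{n+1}\bigl(\fint_{\partial W} f(x)\,dx - f(m)\bigr)$ to the inequality of the preceding theorem) and justify its nonnegativity by Jensen's inequality for the normalized surface measure on $\partial W$, whose barycenter is $m$. Your extra remark that $m\in W$ by convexity is a harmless and correct bit of added care.
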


Indeed, by the Jensen's inequality we have $f(m) \le \fint_{\partial W} f(x)\:dx$, thus $0\le \tfrac{1}{n+1} (\fint_{\partial W} f(x)\:dx - f(m) )$. We can now sum this inequality with \eq{E:thm1} side-by-side to obtain desired inequality.

As a trivial particular case we obtain some sufficient condition for $W$ to be of Jensen-type.
\begin{thm}
Let $W$ be a convex polytope having an inscribed sphere. If the center of this sphere coincide with the center of mass $\partial W$, then $W$ is of Jensen-type.
\end{thm}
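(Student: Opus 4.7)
The proof is essentially immediate from the corollary that precedes it, so my plan is mostly to observe that the final theorem is a direct specialization of that corollary rather than a substantive new result.

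The plan is to invoke the corollary with the hypothesis $s = m$. Since the corollary provides the inequality
\Eq{*}{
\fint_W f(x)\,dx \le \fint_{\partial W} f(x)\,dx + \frac{1}{n+1}\bigl(f(s)-f(m)\bigr)
}
for every convex $f \colon W \to \R$, and the hypothesis of the theorem asserts that the incenter $s$ coincides with the centroid $m$ of $\partial W$, the correction term $\tfrac{1}{n+1}(f(s)-f(m))$ vanishes identically. What remains is exactly the defining inequality \eq{E:conv_def} of a Jensen-type shape, so $W$ qualifies.

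The only things worth writing explicitly are: (i) the hypothesis that $W$ has an inscribed sphere places us in the setting where the corollary applies (so $f(s)$ is well defined because $s \in W$), and (ii) the identification $s = m$ immediately gives $f(s) - f(m) = 0$ pointwise, independent of $f$. There is no technical obstacle; the content of the statement has been absorbed into the corollary (which in turn rests on Theorem concerning polytopes with an inscribed sphere, and on Jensen's inequality applied on $\partial W$ to get $f(m) \le \fint_{\partial W} f$).

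If anything merits emphasis, it is that this theorem should be read as highlighting the geometric condition $s = m$: among convex polytopes admitting an inscribed sphere, the Jensen-type property follows as soon as the incenter and the surface centroid coincide. In particular, this applies to all regular polygons and Platonic solids, by symmetry — each such shape has a symmetry group that fixes a unique point, which therefore must simultaneously be the incenter and the centroid of the boundary — thereby yielding the examples advertised in the introduction without further calculation.
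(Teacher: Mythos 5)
Your proof is correct and is exactly the route the paper takes: the theorem is stated there as a trivial particular case of the preceding corollary, obtained by setting $s=m$ so that the correction term $\tfrac{1}{n+1}(f(s)-f(m))$ vanishes, leaving the defining inequality \eq{E:conv_def}. Your closing remark about symmetry forcing $s=m$ for regular polygons and Platonic solids likewise matches the paper's concluding observation.
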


Obviously this result implies that all Platonic solids are of Jensen-type. 

%;
%%https://www.geometrycode.com/free/polyhedra-math-tables-for-platonic-and-archimedean-solids/ -- platonic and archimedian
%http://www.deltami.edu.pl/temat/matematyka/geometria/stereometria/2013/12/30/Lampy_Catalana/
%W każdą z nich można wpisać kulę styczną do wszystkich ścian. 
%+ potrzebna nietrywialna grupa obrotów.
%in dimension three: .
%\end{prop}
\subsection*{Acknowledgement}
I am grateful to Karol Gryszka and Alfred Witkowski for their valuable remarks.

\def\cprime{$'$} \def\R{\mathbb R} \def\Z{\mathbb Z} \def\Q{\mathbb Q}
  \def\C{\mathbb C}

\end{document}